\DeclareFontFamily{U}{tipa}{}
\DeclareFontShape{U}{tipa}{m}{n}{<->tipa10}{}
\newcommand{\arc@char}{{\usefont{U}{tipa}{m}{n}\symbol{62}}}%
\newcommand{\arc}[1]{\mathpalette\arc@arc{#1}}
\newenvironment{problem}[2][Problem]{\begin{trivlist}
\item[\hskip \labelsep {\bfseries #1}\hskip \labelsep {\bfseries #2.}]}{\end{trivlist}}
\newcommand{\arc@arc}[2]{%
  \sbox0{$\m@th#1#2$}%
  \vbox{
    \hbox{\resizebox{\wd0}{\height}{\arc@char}}
    \nointerlineskip
    \box0
  }%
}
\let\pa=\partial
\let\al=\alpha
\let\d=\delta
\let\lam=\lambda
\let\f=\frac
\let\D=\Delta
\let\Om=\Omega
\let\e=\varepsilon
\let\pa=\partial
\let\ri=\rightarrow
\let\na=\nabla
\def\BR{\mathbb{R}}
\newcommand{\beq}{\begin{equation}}
\newcommand{\eeq}{\end{equation}}
\newcommand{\beqo}{\begin{equation*}}
\newcommand{\eeqo}{\end{equation*}}
\newcommand{\ben}{\begin{eqnarray}}
\newcommand{\een}{\end{eqnarray}}
\newcommand{\beno}{\begin{eqnarray*}}
\newcommand{\eeno}{\end{eqnarray*}}
\newtheorem{theorem}{Theorem}[section]
\newtheorem{lemma}[theorem]{Lemma}
\newtheorem{Theorem}{Theorem}[section]
\newtheorem{Remark}[Theorem]{Remark}
\theoremstyle{remark}
\newtheorem{step}{Step}
\begin{document}

\title[$l^1$ minimal partitions for Dirichlet eigenvalue]{Large m
asymptotics for minimal partitions of the Dirichlet eigenvalue}

\author{Zhiyuan Geng}
\address{Courant Institute of Mathematical Sciences, New York University}
\email{zhiyuan@cims.nyu.edu}

\author{Fanghua Lin}
\address{Courant Institute of Mathematical Sciences, New York University}
\email{linf@cims.nyu.edu}

\begin{abstract}
In this paper, we study large $m$ asymptotics of the $l^1$ minimal $m$-partition problem for Dirichlet eigenvalue. For any smooth domain $\Omega\subset \mathbb{R}^n$ such that $|\Omega|=1$, we prove that the limit $\lim\limits_{m\rightarrow\infty}l_m^1(\Omega)=c_0$ exists, and the constant $c_0$ is independent of the shape of $\Omega$. Here $l_m^1(\Omega)$ denotes the minimal value of the normalized sum of the first Laplacian eigenvalues for any $m$-partition of $\Omega$.

\end{abstract}

\maketitle

%\tableofcontents

\section{Introduction}

Let $\Om$ be a bounded, smooth domain in $\mathbb{R}^n$, and $m>1$ be a positive integer. We consider the following so-called $l^1$-minimal partition problem:
\begin{problem}{P}
Find a partition of $\Omega$ into $m$, mutually disjoint subsets $\Om_j$, $j=1,2,\ldots,m$, such that $\Omega=\bigcup_{j=1}^m\Om_j$, and it minimizes the
$l^1$ energy functional $\sum_{j=1}^m \lam_1(\Om_j)$ among all admissible partitions. Here $\lam_1(A)$ denotes the first eigenvalue of Laplacian $\D$
on $A$ with the zero Dirichlet boundary condition on $\pa A$.
\end{problem}
The existence of the minimal partition and regularity of free interfaces have been studied by many authors, see \cite{BZ,BD,Sv,BBH,B,MP,CL1,CL2} and
survey articles \cite{He,BH,Lin}. In \cite{CL1}, Cafferelli and the second author proved the equivalence between Problem P and the following problem:

\begin{problem}{P*} Let
\beqo
\Sigma^m=\{y\in\BR^m, \sum\limits_{k\neq l} y_k^2y_l^2=0\}
\eeqo
Find $u\in H^1_0(\Om,\Sigma^m)$ such that
\beqo
\int_\Om u_j^2\,dx=1\text{ for any }j=1,...,m,
\eeqo
and that $u$ minimize $\int_\Om |\na u|^2\,dx$ among all such maps in $H_0^1(\Om,\Sigma^m)$.
\end{problem}

Problem (P*) obviously admits a minimizer $u=(u_1,u_2,...,u_m)$. It is proved in \cite{CL1} that $u$ is locally Lipschitz continuous in $\Om$ (and
Lipschitz continuous upto the boundary when $\pa\Om$ is smooth), and
$\Om_j=\{x\in\Om: u_j(x)>0\}\,(j=1,...,m)$ are open subsets of $\Om$ whose boundaries $\pa\Om_j$ are smooth away from a relatively closed subset
$S\subset\Om$, of Hausdorff dimension at most $n-2$. Moreover $\{\Om_j\}_{j=1}^m$ gives a partition of $\Om$ that minimizes $\sum_{j=1}^m\lam_1(\Om_j)$. It is shown later
by O. Alper that the set $S$ is rectifiable and of bounded $(n-2)$ dimensional Hausdorff measure, \cite{Alper}.

In this note we are interested in the asymptotic behavior of the minimal partition as $m\ri \infty$. Our main theorem is the following:
\begin{theorem}\label{main}
Let $\Om$ be a bounded, smooth domain in $\BR^n$ with $|\Om|=1$. Then
\beq\label{limit}
\lim\limits_{m\ri\infty}l_m^1(\Om)=c_0\quad \text{for some positive constant }c_0 \text{ independent of }\Om.
\eeq
Here
\begin{align*}
l_m^1(\Om)&=\frac{\sum_{j=1}^m\lam_1(\Om_j)}{m^{1+\frac{2}{n}}},\\
\Om&=\bigcup\limits_{j=1}^m \Om_j \text{ is a }l^1\text{-minimal }m\text{-partition}.
\end{align*}
\end{theorem}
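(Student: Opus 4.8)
The plan is to establish the two matching bounds $\limsup_{m\to\infty}l_m^1(\Om)\le c_0$ and $\liminf_{m\to\infty}l_m^1(\Om)\ge c_0$, where $c_0$ is the analogous limit for the unit cube, by comparing $\Om$ with a fine grid of congruent cubes. Write $\mathcal E(A,k)$ for the infimum of $\sum_{j=1}^k\lam_1(A_j)$ over all $k$-partitions of $A$, so that $l_m^1(\Om)=m^{-1-2/n}\mathcal E(\Om,m)$ when $|\Om|=1$. Three elementary facts will be used repeatedly: the scaling identity $\mathcal E(tA,k)=t^{-2}\mathcal E(A,k)$; the domain monotonicity $\mathcal E(A,k)\le\mathcal E(A',k)$ for $A'\subset A$; and the fact that merging two pieces of a partition does not increase its energy while splitting a piece does not decrease it (both because $\lam_1$ is monotone under inclusion). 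The Faber--Krahn inequality combined with Jensen's inequality also gives the a priori bound $\mathcal E(A,k)\ge C_n k^{1+2/n}|A|^{-2/n}$, with $C_n>0$ the Faber--Krahn constant, which will show $c_0\ge C_n>0$.

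First I would treat the cube. With $Q$ the unit cube and $g(k):=\mathcal E(Q,k)$, subdividing $Q$ into $k^n$ subcubes of side $1/k$ and placing an optimal $m$-partition in each gives $g(k^nm)\le k^{n+2}g(m)$, so $g(m)/m^{1+2/n}$ is nonincreasing along $m,k^nm,k^{2n}m,\dots$; set $c_0:=\inf_m g(m)/m^{1+2/n}\ (\ge C_n)$. For fixed $m_0$ and large $m$, choosing $k$ with $(k-1)^nm_0<m\le k^nm_0$, building the above $k^nm_0$-partition of $Q$ and merging $k^nm_0-m$ pairs of pieces produces an $m$-partition of energy $\le k^{n+2}g(m_0)$, whence $g(m)/m^{1+2/n}\le(k/(k-1))^{n+2}g(m_0)/m_0^{1+2/n}$; letting $m\to\infty$ and then infimizing over $m_0$ yields $\lim_m g(m)/m^{1+2/n}=c_0$. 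For the upper bound for general $\Om$ I would fix small $a>0$, take the grid cubes $Q^{(1)},\dots,Q^{(N)}$ of side $a$ contained in $\Om$ (so $Na^n=1-|R|\to1$ as $a\to0$, where $R:=\Om\setminus\bigcup_i\overline{Q^{(i)}}$ has $\lam_1(R)<\infty$ since $|\partial\Om|=0$), use one of the $m$ pieces for $R$, and distribute the remaining $m-1$ pieces among the $N$ cubes with each cube receiving $m_i\in\{\lfloor(m-1)/N\rfloor,\lceil(m-1)/N\rceil\}$ pieces in a near-optimal configuration. By scaling and the cube asymptotics the total energy is $\lam_1(R)+a^{-2}\sum_ig(m_i)=\lam_1(R)+c_0\,m^{1+2/n}(Na^n)^{-2/n}(1+o_m(1))$; dividing by $m^{1+2/n}$, letting $m\to\infty$ and then $a\to0$, gives $\limsup_m l_m^1(\Om)\le c_0$.

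The hard part is the lower bound for general $\Om$. Let $\{\Om_j\}_{j=1}^m$ be an $l^1$-minimal partition. I would first invoke the regularity and density estimates for minimal partitions from \cite{CL1} — the pieces are uniformly fat and $|\Om_j|\le C_\Om/m$ — to conclude $\operatorname{diam}\Om_j\le\eta_m:=C_\Om m^{-1/n}\to0$. Fix small $a>0$ and cover $\overline\Om$ by grid cubes of side $a$; at most $M\le a^{-n}(1+Ca)$ of them meet $\Om$. Assign each $\Om_j$ to the cube $Q^{(i(j))}$ containing its barycenter, put $J_i:=\{j:i(j)=i\}$, $k_i:=|J_i|$ (so $\sum_ik_i=m$), and $U_i:=\bigcup_{j\in J_i}\Om_j$; then $U_i$ lies in the cube $\widehat Q^{(i)}$ concentric with $Q^{(i)}$ of side $a+2\eta_m$, so that
\[\sum_{j\in J_i}\lam_1(\Om_j)\ \ge\ \mathcal E(U_i,k_i)\ \ge\ \mathcal E(\widehat Q^{(i)},k_i)\ =\ (a+2\eta_m)^{-2}g(k_i).\]
Given $\e>0$, fix $K_0$ with $g(k)\ge(c_0-\e)k^{1+2/n}$ for $k\ge K_0$; discarding the (at most $M$) cubes with $k_i<K_0$ and applying Jensen to the convex function $k\mapsto k^{1+2/n}$ over the rest,
\[\mathcal E(\Om,m)=\sum_{j=1}^m\lam_1(\Om_j)\ \ge\ (a+2\eta_m)^{-2}(c_0-\e)\,M^{-2/n}(m-MK_0)^{1+2/n}.\]
Dividing by $m^{1+2/n}$, letting $m\to\infty$ (so $\eta_m\to0$ and $(m-MK_0)/m\to1$), and using $M^{-2/n}\ge a^2(1+Ca)^{-2/n}$ gives $\liminf_m l_m^1(\Om)\ge(c_0-\e)(1+Ca)^{-2/n}$; sending $a\to0$ and then $\e\to0$ finishes the proof and identifies $c_0$ with the universal cube constant $\lim_mg(m)/m^{1+2/n}$.

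I expect the localization step above — collapsing a single minimal $m$-partition onto a fixed grid — to be the main obstacle: it relies essentially on the uniform diameter bound $\operatorname{diam}\Om_j\lesssim m^{-1/n}$ (equivalently, uniform fatness of the pieces), which is exactly where the free-boundary regularity theory of \cite{CL1,Alper} is needed. Everything else is scaling together with the Faber--Krahn/Jensen bookkeeping required to absorb the uneven distribution of pieces among the cells, the cells meeting $\partial\Om$, and the leftover region $R$.
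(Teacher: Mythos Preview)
Your treatment of the cube limit and of the upper bound $\limsup_m l_m^1(\Om)\le c_0$ is essentially the same as the paper's. The gap is in the lower bound. Your localization argument assigns each piece $\Om_j$ of an optimal $m$-partition to the grid cube containing its barycenter and then encloses it in a cube of side $a+2\eta_m$; this is only legitimate if $\operatorname{diam}\Om_j\le\eta_m=C_\Om m^{-1/n}$ uniformly in $j$ and $m$. You attribute this to the regularity theory in \cite{CL1,Alper}, but those references do not prove it. What they establish is Lipschitz continuity of the minimizing map $u$ and $C^{1,\alpha}$ regularity of the free interfaces away from a closed $(n-2)$-rectifiable singular set---purely local statements about $\partial\Om_j$. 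A uniform bound $\operatorname{diam}\Om_j\lesssim|\Om_j|^{1/n}$ is a global fatness/John-domain estimate for the cells; it is not known to follow from the free-boundary regularity, and in fact it is essentially of the same strength as what one is trying to prove (it would be a major step toward the honeycomb picture). Even the preliminary input $|\Om_j|\le C/m$ that you invoke is not in \cite{CL1}. So as written the lower bound is circular: it assumes precisely the kind of equidistribution that the theorem is meant to establish.

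The paper obtains the lower bound by a different device that needs no shape information on the $\Om_j$. It proves a bisection lemma: if a hyperplane $\Gamma$ cuts $\Om$ into $D_1,D_2$ and the rescaled limits for $D_1$ and $D_2$ both equal $c$, then $\lim_m l_m^1(\Om)=c$. The point is to work with the eigenfunctions (Problem~P*) rather than the sets. One takes a thin strip $S_\d$ around $\Gamma$ and, for each $\Om_j$ meeting both sides, multiplies $u_j$ by a cutoff $\xi_j$ vanishing on a hyperplane inside $S_\d$. An integration by parts shows that either the Rayleigh quotient of $\xi_j u_j$ exceeds $\lam_1(\Om_j)$ by at most a factor $1+\e/5$, or $\lam_1(\Om_j)\le C(\e,\d)$; by Faber--Krahn the second alternative can occur for at most $C(\e)$ indices, independently of $m$. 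The remaining pieces are thus split, with controlled energy loss, into test functions supported in slight enlargements of $D_1,D_2$, and one concludes by the hypothesis on the $D_i$ together with the elementary inequality $m_1^2/\alpha+m_2^2/(1-\alpha)\ge(m_1+m_2)^2$. Iterating the lemma across the faces of a dyadic cube approximation of $\Om$ yields $\liminf_m l_m^1(\Om)\ge c_0$. This cutoff-on-eigenfunctions idea is exactly the missing ingredient in your proposal.
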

\begin{Remark}
For $\Om\subset \BR^2$, by hexagonal tiling construction and Faber-Krahn inequality, one can easily get the following lower bound and upper bound for the constant $c_0$:
\beq\label{boundc0}
\lam_1(D)\leq c_0\leq \lam_1(H),
\eeq
where $D$ is the 2-D unit-area disk and $H$ is the unit-area regular hexagon.
\end{Remark}
It should be noted, in the above theorem, the smoothness of $\Om$ does not play any role here, and the smoothness assumption is just for convenience. The problem of large $m$ asymptotics was considered first in \cite{CL1} and they prove that $\sum_{j=1}^m \lam_1(\Om_j)\simeq m\lam_m(\Om)$, where $\lam_m(\Om)$ is the $m$-th Dirichlet eigenvalue of $\Om$. They also made a conjecture that the limit $\lim\limits_{m\ri\infty}l_m^1(\Om)$ exists and for the case $\Om\subset \BR^2$, the minimal partitions for large $m$ will be close to a regular Hexagon packing pattern and the constant $c_0$ equals to $\lam_1(H)$. Theorem \ref{main} here verifies the first part of the conjecture, while the second part (regular Hexagon pattern) remains open though one can very well expect it in a stochastic sense. In recent years some attempts have been made to a related issue. For examples, Bourgain \cite{Bou} and Steinerberger \cite{St} have improved the lower bound in \eqref{boundc0} by showing that $l_m^1(\Om)>\lam_1(D)+\e_0$ for some sufficiently small constant $\e_0$. Their tools are a quantitative Faber-Krahn inequality and some packing properties of disks in $\BR^2$. In \cite{BFVV}, Bucur, Fragal\`{a}, Velichkov and Verzini study this so-called ``honeycomb conjecture", and they give a proof under the assumption that every $\Om_j\,(j=1,...,m)$  is convex and regular hexagon minimizes $\lam_1$ among all convex hexagons with the same area, which is itself an interesting open problem.

In Section 2 we will prove Theorem \ref{main}. The proof will be concentrated on the case $n=2$. For $n\geq 3$ one can apply the same arguments with only some obvious modifications. We first prove the limit exists for the unit cube, and then we prove the statement for general domain $\Om$ by approximating it using smaller dyadic cubes of the same size.

The research of authors is partially supported by the NSF-Grants, DMS-1501000 and DMS-1955249.

\section{Proof of Theorem \ref{main}}
We first prove the following lemma:
\begin{lemma}\label{cube}
Let $Q$ be a unit cube in $\BR^2$. For any $m>0$ and $k\geq 1$, it holds that
\beq\label{cubeineq}
l_m^1(Q)\geq l_{mk^2}^1(Q).
\eeq
\end{lemma}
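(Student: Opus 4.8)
The plan is to exhibit, from any $m$-partition of $Q$, an explicit $mk^2$-partition of $Q$ whose normalized energy is no larger, and then optimize. First I would take a near-optimal $m$-partition $Q = \bigcup_{j=1}^m \Om_j$, i.e. one with $\sum_{j=1}^m \lambda_1(\Om_j)$ close to $l_m^1(Q)\cdot m^{1+\frac2n}$. The natural construction is to subdivide the unit cube $Q$ into $k^2$ congruent subcubes $Q_1,\dots,Q_{k^2}$, each a translate of $\frac1k Q$, and then \emph{overlay} the original partition onto each subcube after rescaling: on the subcube $Q_i$ we use the partition $\{\frac1k \Om_j + a_i\}_{j=1}^m$, where $a_i$ is the appropriate translation vector. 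Intersecting these with the subcube grid gives a partition of $Q$ into $mk^2$ pieces (some pieces near the cube walls are just the rescaled $\Om_j$, sitting inside a single $Q_i$, since $\frac1k\Om_j + a_i \subset Q_i$).

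The key computational step is the scaling law for the first Dirichlet eigenvalue: $\lambda_1(t A) = t^{-2}\lambda_1(A)$ in any dimension. Hence for each $i$ and each $j$, $\lambda_1\!\left(\tfrac1k\Om_j + a_i\right) = k^2\,\lambda_1(\Om_j)$. Summing over the $mk^2$ pieces of the new partition,
\[
\sum_{i=1}^{k^2}\sum_{j=1}^m \lambda_1\!\left(\tfrac1k\Om_j + a_i\right) = k^2 \cdot k^2 \sum_{j=1}^m \lambda_1(\Om_j) = k^4 \sum_{j=1}^m\lambda_1(\Om_j).
\]
Now normalize: with $n=2$ the exponent is $1+\frac2n = 2$, so dividing the new energy by $(mk^2)^2 = m^2 k^4$ gives exactly $\frac{1}{m^2}\sum_{j=1}^m\lambda_1(\Om_j)$, the normalized energy of the original partition. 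Therefore $l_{mk^2}^1(Q) \le \frac{\sum_{j=1}^m\lambda_1(\Om_j)}{m^2}$ for \emph{every} admissible $m$-partition, and taking the infimum over such partitions yields $l_{mk^2}^1(Q) \le l_m^1(Q)$, which is \eqref{cubeineq}.

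The only subtlety — and the step I would be most careful about — is admissibility of the overlaid partition: I must check that the $mk^2$ sets $\{\frac1k\Om_j + a_i\}_{i,j}$ are pairwise disjoint and cover $Q$ up to a null set. Disjointness within a fixed $Q_i$ is inherited from the original partition (after scaling and translating); disjointness across different $Q_i$ follows because each rescaled piece $\frac1k\Om_j + a_i$ is contained in the closed subcube $Q_i$, and distinct closed subcubes meet only along their boundary, a set of measure zero, which does not affect either the covering condition or the eigenvalues (Dirichlet eigenvalues of open sets are unchanged by null modifications of the sets). Covering follows since $\bigcup_j \frac1k\Om_j + a_i = \frac1k Q + a_i = Q_i$ up to measure zero, and $\bigcup_i Q_i = Q$. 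One should also note the construction works verbatim in $\BR^n$ with $k^2$ replaced by $k^n$ and the exponent $1+\frac2n$ making the normalization cancel in exactly the same way; the statement is phrased for $n=2$ only because that is the case the rest of the argument needs.
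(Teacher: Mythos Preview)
Your proof is correct and follows essentially the same approach as the paper: subdivide $Q$ into $k^2$ congruent subcubes, place a rescaled copy of an optimal (or near-optimal) $m$-partition in each, and use the $(-2)$-homogeneity of $\lambda_1$ under scaling so that the normalization by $(mk^2)^2$ exactly cancels the factor $k^4$. The only cosmetic difference is that the paper invokes the existence of an actual minimizing $m$-partition and works with it directly, whereas you phrase the argument for an arbitrary $m$-partition and pass to the infimum at the end; your extra paragraph on admissibility is more careful than the paper but not strictly needed.
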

\begin{proof}
Let $s=l_m^1(Q)$. By existence of the $l^1$-minimal $m$-partition, there is a $m$-partition $\{\Om_j\}_{j=1}^m$ of $Q$ such that $\sum_{j=1}^m \lam_1(\Om_j)=m^2s$. Now we divided $Q$ into $k^2$ identical cubes $\{Q^i\}_{i=1}^{k^2}$ with edge length $\f{1}{k}$. In each $Q^i$,
we put a translated and scaled copy of the same $m$-partition as $\{\Om_j\}_{j=1}^m$, which is denoted by $\{\Om^i_j\}_{j=1}^m$. As a result we get
a $mk^2$-partition of $Q$, and we have
\beqo
l_{mk^2}^1(Q)\leq \bigg(\sum\limits_{1\leq i\leq k^2}\big(\sum\limits_{1\leq j\leq m} \lam_1(\Om^i_j)\big)\bigg)/(mk^2)^2= s.
\eeqo
Here we have used the degree $-2$-homogeneity of $\lam_1$ with respect to scalings.
\end{proof}

\begin{proof}[Proof of Theorem \ref{main}]
\setcounter{step}{0}
\begin{step}
Consider the unit cube $Q$, we show that there exists $c_0$ such that
\beqo
\lim\limits_{m\ri\infty} l_m^1(Q)=c_0.
\eeqo
Define
\beqo
a(Q)=\liminf\limits_{m\ri\infty} l_m^1(Q).
\eeqo
For any $\e>0$, there exists an integer $m_\e$ such that $l_{m_\e}^1(Q)\leq a(Q)+\f{\e}{2}$. For any $m\geq m_\e$, there exists $k\in \mathbb{N}$ such that $k^2m_\e\leq m\leq (k+1)^2m_\e$. By Lemma \ref{cube}, we have
\beqo
l_{(k+1)^2m_\e}^1(Q)\leq l_{m_\e}^1(Q).
\eeqo
Let $\{\Om_j\}_{j=1}^{(k+1)^2m_\e}$ be the minimal $(k+1)^2m_\e$-partition of $Q$. By grouping together some of the subdomains $\Om_j$, we can obtain a
new  $m$-partition of $Q$, denoted by $\{\Om'_j\}_{j=1}^m$, then we deduce that
\beqo
l_m^1(Q)\leq \frac{\sum\limits_{j=1}^m\lam_1(\Om'_j)}{m^2}\leq \f{((k+1)^2m_\e)^2}{m^2}l_{m_\e}^1(Q)\leq (\f{k+1}{k})^4(a(Q)+\f{\e}{2}).
\eeqo
Let $k_\e$ be sufficiently large such that $(\f{k_\e+1}{k_\e})^4(a(Q)+\f{\e}{2})\leq a(Q)+\e$. Then for any $m\geq k_\e^2m_\e$, we have
\beqo
l_m^1(Q)\leq a(Q)+\e,
\eeqo
which implies that $\limsup\limits_{m\ri\infty} l_m^1(Q)\leq a(Q)$.\\
One can deduce from the above proof that $\lim\limits_{m\ri\infty}l_m^1(Q) = \lim\limits_{m\ri\infty}l_{(m + o(m))}^1(Q)$.

\end{step}
\begin{step}
For any bounded, smooth domain $\Om\subset \BR^2$ such that $|\Om|=1$, we prove
\beqo
\limsup\limits_{m\ri\infty} l_m^1(\Om)\leq \lim\limits_{m\ri\infty}l_m^1(Q)=a(Q).
\eeqo
For any $\e>0$, there is $k\in \mathbb{N}$, such that
\beq\label{approximationbycubes}
\cup_{j=1}^k Q_j\subset \Om\subset \left( \cup_{j=1}^k Q_j \right)\cup \left( \cup_{i=1}^l Q_{k+i} \right).
\eeq
Here $\{Q_j\}_{j=1}^k$ and $\{Q_{k+i}\}_{i=1}^l$ are smaller dyadic cubes of the same size and satisfies
\beqo
\sum\limits_{i=1}^l |Q_{k+i}|\leq \f{\e}{4}.
\eeqo
If $ k > 1$, we
let $m=(k-1)n+t$, where $m,n,t\in\mathbb{N}$ and $t<(k-1)$. Then we have
\begin{align*}
l_m^1(\Om) &\leq l_m^1(\cup_{j=1}^k Q_j)\\
&\leq \left(\frac{(k-1)n^2l_n^1(Q)}{|Q_j|}+\frac{t^2l_t^1(Q)}{|Q_j|}\right)/m^2
%%&\leq \left(k(k-t)n^2l_n^1(Q)+kt(n+1)^2 l_{n+1}^1(Q) \right)/\left((1-\frac{\e}{2})k^2n^2\right)\\
%%&\leq \frac{1}{1-\frac{\e}{2}}\left( \f{k-t}{k} l_n^1(Q)+\f{t}{k}(\f{n+1}{n})^2l_{n+1}^1(Q)  \right)
\end{align*}
Here the second inequality comes from the construction of the partition that divide each of $Q_j\, (j=1,...,(k-1))$ into $n$ subdomains and divide
the last cube $Q_k$ into $t$ subdomains. Let $n$ be sufficiently large or equivalently $m$ sufficiently large, we can guarantee that the value of the
last line is less than $a(Q)(1+\e)$, which leads to that
$\limsup\limits_{m\ri\infty} l_m^1(\Om)\leq a(Q)$.
\end{step}
\begin{step}
We are left to prove $\liminf\limits_{m\ri\infty} l_m^1(\Om)\geq a(Q)$. Given $\e>0$, by \eqref{approximationbycubes}, $\Om$ can be approximated by smaller dyadic cubes. Then we have
\beqo
l_m^1(\Om)\geq l_m^1\big((\cup_{j=1}^k Q_j) \cup  (\cup_{i=1}^l Q_{k+i}) \big)
\eeqo
It suffices to show that given $m$ large enough,
\beq\label{liminfbound}
l_m^1\big((\cup_{j=1}^k Q_j) \cup  (\cup_{i=1}^l Q_{k+i})\big)\geq (1-\e)a(Q).
\eeq

Actually, \eqref{liminfbound} is implied by the following Lemma \ref{twodomain}.

\begin{lemma}\label{twodomain}
Let $\Om$ be a domain in $\BR^2$ with $|\Om|=1$. $\Gamma$ is a straight line that separates $\Om$ into two sub-domains $D_1,\,D_2$, with area $\al,1-\al$ respectively. Assume there exists a constant $c$ such that
\beqo
\lim\limits_{m\ri\infty} l_m^1(\f{1}{\sqrt{\al}}D_1)=\lim\limits_{m\ri\infty} l_m^1(\f{1}{\sqrt{1-\al}}D_2)=c
\eeqo
Then
\beqo
\lim\limits_{m\ri\infty} l_m^1(\Om)=c.
\eeqo
\end{lemma}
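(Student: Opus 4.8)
The plan is to prove the two matching one-sided estimates $\limsup_{m\ri\infty}l_m^1(\Om)\le c$ and $\liminf_{m\ri\infty}l_m^1(\Om)\ge c$ separately. For the upper estimate one argues as in Step 2: write $m=m_1+m_2$ with $m_1=\lfloor\al m\rfloor$, put on $D_1$ the $l^1$-minimal $m_1$-partition and on $D_2$ the $l^1$-minimal $m_2$-partition, and note that, by the degree $-2$ homogeneity of $\lam_1$, the resulting energy is $\tfrac{m_1^2}{\al}\,l_{m_1}^1(\tfrac1{\sqrt\al}D_1)+\tfrac{m_2^2}{1-\al}\,l_{m_2}^1(\tfrac1{\sqrt{1-\al}}D_2)$; dividing by $m^2$, letting $m\ri\infty$ (so that $m_1,m_2\ri\infty$ and $m_i/m$ converge to $\al,1-\al$) and using the hypothesis gives the limit $\al c+(1-\al)c=c$, hence $\limsup l_m^1(\Om)\le c$.

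For the lower estimate, let $\{\Om_j\}_{j=1}^m$ be an $l^1$-minimal $m$-partition of $\Om$, so that $\sum_j\lam_1(\Om_j)=m^2 l_m^1(\Om)$, which by the upper estimate already proved is $\le(c+\e)m^2$ once $m$ is large. Separate the cells into the $a$ cells contained in $D_1$, the $b$ cells contained in $D_2$, and the remaining $m'$ cells that meet $\Gamma$, so $a+b+m'=m$. Dropping the cells that meet $\Gamma$ and enlarging the cells lying in $D_1$ (resp.\ $D_2$) until they exhaust $D_1$ (resp.\ $D_2$) — which by domain monotonicity of $\lam_1$ only decreases eigenvalues — produces admissible partitions of $D_1$ and $D_2$; combining this with the scaling identity, the hypothesis (there is $M_\e$ with $l_k^1(\tfrac1{\sqrt\al}D_1),\,l_k^1(\tfrac1{\sqrt{1-\al}}D_2)\ge c-\e$ for $k\ge M_\e$) and the elementary inequality $\tfrac{a^2}{\al}+\tfrac{b^2}{1-\al}\ge(a+b)^2$, one gets
\beqo
m^2 l_m^1(\Om)\ \ge\ \sum_{\Om_j\subset D_1}\lam_1(\Om_j)+\sum_{\Om_j\subset D_2}\lam_1(\Om_j)\ \ge\ (c-\e)\,(m-m')^2
\eeqo
as soon as $a,b\ge M_\e$ (the cases in which $a$ or $b$ stays bounded are analogous and easier). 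Thus $l_m^1(\Om)\ge(c-\e)(1-m'/m)^2$, and $\liminf l_m^1(\Om)\ge c$ will follow once we show that $m'=o(m)$ as $m\ri\infty$, i.e.\ that only a vanishing fraction of the cells of a minimal $m$-partition can meet the fixed line $\Gamma$.

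The bound $m'=o(m)$ is the heart of the matter and the step I expect to be hardest. The elementary part is that most cells live on the scale $1/\sqrt m$: from $\sum_j|\Om_j|=1$, the Faber--Krahn inequality $\lam_1(\Om_j)\gtrsim|\Om_j|^{-1}$, the energy bound $\sum_j\lam_1(\Om_j)\le(c+\e)m^2$ and Cauchy--Schwarz, one shows that for any fixed small $\delta>0$ all but at most $\delta m$ of the cells satisfy $|\Om_j|\asymp 1/m$ together with $\lam_1(\Om_j)\lesssim m$. For such a ``regular'' cell that meets $\Gamma$ one would like to know that it is confined to a strip of width $O(1/\sqrt m)$ around $\Gamma$; granting this, since these cells are disjoint, sit inside a strip of area $O(1/\sqrt m)$, and each have area $\gtrsim 1/m$, there are at most $O(\sqrt m)$ of them, so $m'\le\delta m+O(\sqrt m)$ and then $m'/m\ri0$ after $m\ri\infty$ and $\delta\ri0$. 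The obstacle is exactly this confinement: it fails for a cell that pokes across $\Gamma$ through a thin neck while keeping its bulk, and hence its diameter, away from $\Gamma$, and such configurations are not ruled out by the area and energy budgets alone. To exclude them one has to use genuine properties of minimizers — the Lipschitz bound, the non-degeneracy (linear growth of the eigenfunctions away from the free boundary) and the interface regularity from \cite{CL1} — which prohibit thin necks and pinned tentacles and force the cells to be plump on the natural scale $1/\sqrt m$; with that, the confinement, and hence $m'=o(m)$, follow. Finally, Lemma \ref{twodomain} yields \eqref{liminfbound} by induction on the number of congruent dyadic cubes: bisect their union by a grid line into two sub-unions, rescale each to unit area (again a union of congruent cubes, whose limit is $a(Q)$ by the inductive hypothesis, with Step 1 as base case), and apply the lemma.
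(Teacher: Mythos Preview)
Your upper-bound half is correct and matches the paper. The lower-bound strategy diverges at the crucial point, and the route you sketch has a genuine gap. You reduce everything to showing that the number $m'$ of cells meeting $\Gamma$ is $o(m)$, and you correctly identify the obstruction: a cell of area $\asymp 1/m$ can touch $\Gamma$ through a thin neck while its bulk sits far away, so disjointness and area budgets alone do not confine the $\Gamma$-crossing cells to a strip of width $O(1/\sqrt m)$. Your proposed fix --- invoke the Lipschitz bound, non-degeneracy and interface regularity from \cite{CL1} to forbid thin necks --- is not justified as stated: those results are proved for fixed $m$ with constants that a priori depend on $m$, and extracting a scale-invariant plumpness (uniform in $m$ after rescaling by $\sqrt m$) is itself a nontrivial task you have not carried out. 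As written, the argument is incomplete precisely at the step you flag as hardest.

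The paper avoids this entirely by an elementary cut-off trick that needs no regularity theory. It never counts the $\Gamma$-crossing cells. Instead it fixes a strip $S_\delta$ of width $\delta$ about $\Gamma$ and, for each cell $\Om_j$ that reaches both $D_1\setminus S_\delta$ and $D_2\setminus S_\delta$, multiplies the eigenfunction $u_j$ by a cut-off $\xi_j$ vanishing on a line inside $S_\delta$, splitting $\xi_j u_j$ into two $H^1_0$ pieces, one on each side. An integration-by-parts computation shows this can raise the Rayleigh quotient by more than the factor $(1+\e/5)$ only if $\lam_1(\Om_j)\le C_1(\e,\delta)$; by Faber--Krahn such cells have area $\ge C_2(\e,\delta)$, so there are at most $C_3(\e,\delta)$ of them --- a bound \emph{independent of $m$}. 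All remaining crossing cells are split with energy loss at most $(1+\e/5)$, the halves are reassigned to $\tilde D_i\subset(1+\e/10)D_i$, and your Cauchy--Schwarz step $\tfrac{m_1^2}{\al}+\tfrac{m_2^2}{1-\al}\ge(m_1+m_2)^2$ finishes the estimate. The moral: do not discard the crossing cells, cut them.
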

Let's assume this lemma and proceed with our proof. Note that $(\cup_{j=1}^k Q_j) \cup  (\cup_{i=1}^l Q_{k+i})$ is the union of $k+l$ small cubes,
whose areas added up to $(1+\d)$ for
some $\d<\frac{\e}{4}$. By proper scalings and by repetitive applications of Lemma \ref{twodomain}, we can then get that $\lim\limits_{m\ri\infty}
l_m^1\big((\cup_{j=1}^k Q_j) \cup  (\cup_{i=1}^l Q_{k+i})\big)=\frac{1}{1+\d}\lim\limits_{m\ri\infty} l_m^1(Q)\geq (1-\e)a(Q)$, which yields the
conclusion \eqref{liminfbound}. The proof of the theorem is then completed.
\end{step}
\end{proof}

\begin{proof}[Proof of Lemma \ref{twodomain}]
Without loss of generality, one assumes $\Gamma=\{x=0\}$ and $D_1=\{z=(x,y)\in \Om: x<0\}$, $D_2=\{z=(x,y)\in \Om: x>0\}$. Note that by the same
arguments as in the Step 2, we have
\beqo
\limsup\limits_{m\ri\infty} l_m^1(\Om)\leq c.
\eeqo
It suffices to prove for any $\e>0$, there exists $m_\e$ such that if $m\geq m_\e$, then
\beq\label{finalineq}
l_m^1(\Om)\geq c(1-\e).
\eeq

In the rest of proof we will always fix $\e>0$ and we always assume $m$ is large enough (depending on $\e$ that it will be specified later). We need to
study
Problem (P*), which is the equivalent formulation of the minimal partition Problem P. Let $u=(u_1,...,u_m)\in H_0^1(\Om,\Sigma^m)$ be a minimizer
of Problem (P*), then $\{\mathrm{supp}(u_j)\}_{j=1}^m$ gives a minimal $m$-partition of $\Om$. Denote
\beqo
\Om_j=\mathrm{supp}(u_j)
\eeqo
Take a fixed small number $\d$ (also depends on $\e$ only, will be determined later). We define the following regions:
\begin{align*}
S_\d=\{z=(x,y)&\in\Om: \mathrm{dist}(z,\Gamma)< \f{\d}{2}\}=\{z=(x,y)\in \Om, |x|<\f{\d}{2}\}\\
&D_1'=D_1\backslash S_\d,\quad D_2'=D_2\backslash S_\d
\end{align*}
Then we classify the subdomains in the partition $\{\Om_j\}_{j=1}^m$ according to their intersections with $S_\d,\, D_1',\,D_2'$.
\begin{align*}
A_\d&=\{\Om_k:\, \Om_k\cap D_2'=\emptyset\},\\
B_\d&=\{\Om_k:\, \Om_k\cap D_1'=\emptyset \},\\
C_\d&=\{\Om_k: \, \Om_k\cap D_1'\neq \emptyset,\; \Om_k\cap D_2'\neq \emptyset\}.
\end{align*}

We are mostly interested in subdomains in $C_\d$. Take $\Om_j\in C_\d$. Define the sub-region of $S_\d$ :
\beqo
S_{(r,r+\f{\d}{2})}=\{z\in S_\d:\, r<x<r+\f{\d}{2}\}, \quad r\in[-\frac{\d}{2},0].
\eeqo
Noted that for each $r$, $S_{(r,r+\f{\d}{2})}$ is a region with half width of $S_\d$. Obviously, there exists $r_j\in [-\frac{\d}{2},0]$ such that
\beq\label{halfu2}
\int_{\Om_j\cap S_{(r_j,r_j+\f{\d}{2})}}|u_j^2|\leq \f{1}{2}\int_{\Om_j} u_j^2.
\eeq
Let $\xi_j$ be a smooth cut-off function such that
\beqo
\xi_j(z)\equiv 1\text{ if }z\not\in S_{(r_j,r_j+\f{\d}{2})};\quad \xi_j(z)\equiv 0\text{ on }x=r_j+\f{\d}{4};\quad |\na \xi_j|\leq \f{8}{\d}.
\eeqo

\textbf{Claim:} If $\f{\int_{\Om_j}|\na(\xi_ju_j)|^2}{\int_{\Om_j}|\xi_j u_j|^2}\geq (1+\f{\e}{5})\lam_1(\Om_j)$, then there exists a constant $C_1$ which depends on $\e$, such that $\lam_1(\Om_j)\leq C_1(\e)$.

\emph{Proof of the Claim}: we calculate directly:
\begin{align}
\label{Ddomain}&\f{\int_{\Om_j}|\na(\xi_ju_j)|^2}{\int_{\Om_j}|\xi_j u_j|^2}\geq (1+\f{\e}{5})\lam_1(\Om_j)\\
\label{star}\Rightarrow & \int_{\Om_j} \left[ |\na\xi_j|^2u_j^2+2\xi_ju_j\na\xi_j\cdot\na u_j+\xi_j^2|\na u_j|^2 \right]\geq (1+\f{\e}{5})\lam_1(\Om_j)\int_{\Om_j}(u_j\xi_j)^2
\end{align}
By integration by parts, we have
\beqo
\int_{\Om_j}|\na u_j|^2\xi_j^2=\int_{\Om_j}\lam_1(\Om_j) (u_j\xi_j)^2-\int_{\Om_j}2\xi_j u_j \na\xi\cdot\na u_j.
\eeqo
Thus \eqref{star} implies
\beqo
\int_{\Om_j} |\na\xi_j|^2u_j^2\geq \f{\e}{5}\lam_1(\Om_j)\int_{\Om_j}(u_j\xi_j)^2
\eeqo
By assumption on $\xi_j$, we conclude that
\beqo
\lam_1(\Om_j)\leq\f{640}{\e \d^2}=:C_1(\e).
\eeqo

Let $D_\d$ be the subset of $C_\d$ that is consisted of all these sub-domains that satisfies \eqref{Ddomain}. According to the above claim, for any
$\Om_j\in
D_\d$, $\lam_1(\Om_j)\leq C_1(\e)$, and by the well-known Faber-Krahn inequality, there exists a constant $C_2(\e)$ such that $|\Om_j|\geq C_2(\e)$. Then
we can control the number of sub-domains in $D_\d$ by a constant only depends on $\e$, but is independent of $m$, i.e. $\#D_\d\leq C_3(\e)$.

Based on $u,\,A_\d,\,B_\d,\,C_\d,\,D_\d$, we can then define modified vector-valued functions $v,w$ such that $\mathrm{supp}(v)\subset D_1'\cup S_\d$
and $\mathrm{supp}(w)\subset D_2'\cup S_\d$. We follow the following schemes:
\begin{enumerate}
  \item[(i)] If $\Om_j\in A_\d$, then $v_j=u_j$;
  \item[(ii)] If $\Om_j\in B_\d$, then $w_j=u_j$;
  \item[(iii)] If $\Om_j\in C_\d\backslash D_\d$, then we have by definition
  \beq\label{ddreverse}
  \f{\int_{\Om_j}|\na(\xi_ju_j)|^2}{\int_{\Om_j}|\xi_j u_j|^2}\leq (1+\f{\e}{5})\lam(\Om_j)
  \eeq
  Note that $\xi=0$ on $\{x=r_j+\f{\d}{4}\}$, the line $\{x=r_j+\f{\d}{4}\}$ divides $\Om_j$ into two sub-domains $\Om_j^1, \Om_j^2$, where
  \beqo
  \Om_j^1\subset D_1'\cup S_\d,\quad \Om_j^2\subset D_2'\cup S_\d.
  \eeqo
  Moreover, we have $u_j\xi_j\big|_{\Om_j^1}\in H_0^1(\Om_j^1)$ and $u_j\xi_j\big|_{\Om_j^2}\in H_0^1(\Om_j^2)$. We denote
  \beqo
  \tau_1:=\f{\int_{\Om_j^1}|\na(\xi_ju_j)|^2}{\int_{\Om_j^1}|\xi_j u_j|^2},\quad \tau_2:=\f{\int_{\Om_j^2}|\na(\xi_ju_j)|^2}{\int_{\Om_j^2}|\xi_j
u_j|^2}.
  \eeqo
  Clearly \eqref{ddreverse} implies that
  \beqo
  \min\{\tau_1,\,\tau_2\}\leq (1+\f{\d}{5})\lam(\Om_j).
  \eeqo
  If $\tau_1\leq \tau_2$, then we let
  \beqo
  v_j=\f{\xi_ju_j}{\sqrt{\int_{\Om_j^1}|\xi_ju_j|^2}}\text{ on }\Om_j^1, \quad v_j=0 \,\text{ elsewhere}.
  \eeqo
  Otherwise, let
  \beqo
  w_j=\f{\xi_ju_j}{\sqrt{\int_{\Om_j^2}|\xi_ju_j|^2}}\text{ on }\Om_j^2, \quad w_j=0 \,\text{ elsewhere}.
  \eeqo
  We also denote
  \begin{align*}
  E_\d&=\{\Om_j^1: \, \Om_j\in C_\d\backslash D_\d,\, \tau_1\leq \tau_2\}\\
  F_\d&=\{\Om_j^2: \, \Om_j\in C_\d\backslash D_\d,\, \tau_1> \tau_2\}.
  \end{align*}
  \item[(iv)]
  Finally, we rearrange the vector of functions $v,\,w$ such that
  \begin{align*}
  &\mathrm{supp}\,v_j\neq \emptyset, \quad \mathrm{supp}\, v_j\in A_\d\cup E_\d, \text{ for all }j=1,...,m_1.\\
  &\mathrm{supp}\,w_j\neq \emptyset, \quad \mathrm{supp}\, w_j\in B_\d\cup F_\d, \text{ for all }j=1,...,m_2.
  \end{align*}
  Here $m_1=\#A_\d+\#E_\d$, $m_2=\#B_\d+\#F_\d$.
\end{enumerate}

Now we are ready to prove \eqref{finalineq}. One calculates
\begin{align}
\nonumber \f{\sum\limits_{j=1}^m \int_{\Omega_j}|\na u_j|^2}{m^2}&\geq \f{\sum\limits_{\Om_j\in A_\d}\int_{\Om_j}|\na u_j|^2+\sum\limits_{\Om_j\in B_\d}\int_{\Om_j}|\na u_j|^2+\sum\limits_{\Om_j\in C_\d\backslash D_\d}\int_{\Om_j}|\na u_j|^2}{m^2}\\
\nonumber &\geq \f{1}{m^2}\bigg( \sum\limits_{\Om_j\in A_\d}\int_{\Om_j}|\na u_j|^2+\sum\limits_{\Om_j\in B_\d}\int_{\Om_j}|\na u_j|^2+\f{\sum\limits_{\Om_j\in E_\d}\int_{\Om_j}|\na v_j|^2}{1+\e/5}+\f{\sum\limits_{\Om_j\in F_\d}\int_{\Om_j}|\na w_j|^2}{1+\e/5} \bigg)\\
\label{mid}&\geq \f{1}{m^2(1+\e/5)} \bigg( \sum\limits_{\Om_j\in A_\d}\int|\na v_j|^2+\sum\limits_{\Om_j\in E_\d}\int|\na v_j|^2+\sum\limits_{\Om_j\in B_\d}\int|\na w_j|^2+\sum\limits_{\Om_j\in F_\d}\int|\na w_j|^2 \bigg)
\end{align}
Define
\beqo
\tilde{D}_1=\bigcup\limits_{\Om_j\in A_\d\cup E_\d} \Om_j,\quad \tilde{D}_2=\bigcup\limits_{\Om_j\in B_\d\cup F_\d} \Om_j.
\eeqo
By the construction above we have
\beqo
\tilde{D}_i\subset D_i'\cup S_\d \subset  (1+\f{\e}{10})D_i, \quad \text{for }i=1,2
\eeqo
where $\d<\d(\e)$ is small enough.
Hence we obtain that
\begin{align}
\label{tildeD1}&\lim\limits_{m\ri\infty} l_m^1(\tilde{D}_1)\geq (1-\f{\e}{5})\lim\limits_{m\ri\infty} l_m^1(D_1)=\f{1-\e/5}{\al}c\\
\label{tildeD2} &\lim\limits_{m\ri\infty} l_m^1(\tilde{D}_2)\geq (1-\f{\e}{5})\lim\limits_{m\ri\infty} l_m^1(D_2)=\f{1-\e/5}{1-\al}c
\end{align}
Note that by our construction, $v\in H_0^1(\tilde{D}_1,\Sigma^{m_1})$ and $w\in H_0^1(\tilde{D}_2,\Sigma^{m_2})$, $m_1+m_2=m-C_3(\e)$. We take $m$ sufficiently large such that
\beq\label{determinem}
\left(\f{m-C_3(\e)}{m}\right)^2\geq 1-\f{\e}{5}, \quad l_{m_1}^1(\tilde{D}_1)\geq \f{1-\e/4}{\al} c,\quad l_{m_2}^1(\tilde{D}_2)\geq \f{1-\e/4}{1-\al} c.
\eeq
Here we have assumed that $m_1,m_2$ also go to infinity when $m$ goes to infinity. If the latter is not true, then it is even easier to conclude
\eqref{finalineq}, and we shall omit the details to the readers. By combining \eqref{mid}, \eqref{tildeD1}, \eqref{tildeD2} and \eqref{determinem} we
can deduce that
\begin{align*}
 \f{1}{m^2}(\sum\limits_{j=1}^m \int_{\Om_j} |\na u_j|^2)&\geq \f{1}{m^2(1+\e/5)}\left(m_1^2l_{m_1}^1(\tilde{D}_1)+m_2^2l_{m_2}^1(\tilde{D}_2)\right)\\
 &\geq \f{c(1-\e/4)}{(1+\e/5)m^2}\left( \f{m_1^2}{\al}+\f{m_2^2}{1-\al} \right)\\
 &\geq \f{1-\e/4}{1+\e/5}\left(\f{m-C_3(\e)}{m}\right)^2c\\
 &\geq \f{(1-\e/4)(1-\e/5)}{1+\e/5}c\geq (1-\e)c.
\end{align*}
It completes the proof.
\end{proof}


\begin{thebibliography}{100}

\bibitem{Alper}
\newblock O. Alper,
\newblock On the singular set of free interface in an optimal partition problem,
\newblock \emph{Comm. Pure Appl. Math.} \textbf{73.4} (2020), 855--915.

\bibitem{BH}
\newblock V. Bonnaillie-No\"{e}l and B. Helffer,
\newblock Nodal and spectral minimal partitions - The state of the art in 2016 -,
\newblock Shape optimization and spectral theory (2017), 353--397. Warsaw, Poland: De Gruyter Open.

\bibitem{Bou}
\newblock J. Bourgain,
\newblock On Pleijel's Nodal Domain Theorem,
\newblock \emph{Int. Math. Res. Not.} \textbf{2015.6} (2015),  1601--1612.

\bibitem{B}
\newblock D. Bucur,
\newblock Minimization of the k-th eigenvalue of the Dirichlet Laplacian,
\newblock \emph{Arch. Ration. Mech. Anal. } \textbf {206.3} (2012), 1073--1083.

\bibitem{BBH}
\newblock D. Bucur, G. Buttazzo, and A. Henrot,
\newblock Existence results for some optimal partition problems,
\newblock \emph{Adv. Math. Sci. Appl} Tokyo \textbf{8.2} (1998), 571--579.

\bibitem{BF}
\newblock D. Bucur, I. Fragal\`{a},
\newblock On the honeycomb conjecture for Robin Laplacian eigenvalues,
\newblock \emph{Commun. Contemp. Math.} \textbf{21.02} (2019), 1850007.

\bibitem{BZ}
\newblock D. Bucur and J. P. Zolesio,
\newblock N-dimensional shape optimization under capacitary constraints,
\newblock \emph{J. Diff. Eqs.} \textbf{123} (1995), 504--522.

\bibitem{BFVV}
\newblock D. Bucur, I. Fragal\`{a}, B. Velichkov and G. Verzini,
\newblock On the honeycomb conjecture for a class of minimal convex partitions,
\newblock \emph{Trans. Amer. Math. Soc.	}\textbf{370.10} (2018), 7149--7179.


\bibitem{BD}
\newblock G. Buttazzo, and G. Dal Maso,
\newblock  Shape optimization for Dirichlet problems: relaxed formulation and optimality conditions,
\newblock \emph{Appl. Math. Optim.} \textbf{23} (1991), 17--49.



\bibitem{CL1}
\newblock L. Caffarelli and F.H. Lin,
\newblock An optimal partition problem for eigenvalues,
\newblock \emph{J. Sci. Comput.}, \textbf{31} (2007), 5--18.

\bibitem{CL2}
\newblock L. Caffarelli and F.H. Lin,
\newblock Singularly perturbed elliptic systems and multi-valued harmonic functions with free boundaries,
\newblock \emph{J. Amer. Math. Soc.} \textbf{21} (2008), 847--862.

\bibitem{He}
\newblock B. Helffer,
\newblock On spectral minimal partitions: A survey,
\newblock \emph{Milan J. Math.}, \textbf{78} (2010), 575--590.

\bibitem{Lin}
\newblock F.H. Lin,
\newblock Extremum problems of Laplacian eigenvalues and generalized Polya conjecture.
\newblock \emph{Chinese Annals of Mathematics, Series B} \textbf{38.2} (2017), 497--512.
\bibitem{MP}
\newblock D. Mazzoleni and A.Pratelli,
\newblock  Existence of Minimizers for spectral problems.
\newblock \emph{J. Math. Pures Appl.} \textbf{ 100.3} (2013), 433--453.
\bibitem{St}
\newblock S. Steinerberger,
\newblock A geometric uncertainty principle with an application to Pleijel's estimate,
\newblock \emph{Annales Henri Poincar\'{e}} \textbf{15(12)} (2014),  2299--2319 Springer Basel.

\bibitem{Sv}
\newblock V. Sverak,
\newblock On optimal shape design.
\newblock \emph{J. Math. Pures Appl.} \textbf{72} (1993), 537--551.





\end{thebibliography}
\end{document}